\documentclass[12pt,leqno]{article}
\tolerance=2000
\usepackage[mathscr]{eucal}
\usepackage{amsmath,amssymb,latexsym,theorem,enumerate,ifthen}
\usepackage{orcidlink}

\setlength{\oddsidemargin}{-5truemm}
\setlength{\evensidemargin}{-5truemm}
\setlength{\topmargin}{-1.6truecm}
\setlength{\textheight}{23.5cm}
\textwidth17cm
\parskip 2mm
\setlength{\abovedisplayskip}{10pt plus 2.5pt minus 7.5pt}
\setlength{\belowdisplayskip}{10pt plus 2.5pt minus 7.5pt}

\jot3mm

\newcommand{\NN}{\mathbb{N}}

\newcommand{\RR}{\mathbb{R}}

\newcommand{\ZZ}{\mathbb{Z}}

\newcommand{\be}{{\boldsymbol{e}}}

\newcommand{\bx}{{\boldsymbol{x}}}

\newcommand{\by}{{\boldsymbol{y}}}

\newcommand{\ee}{\mathrm{e}}
\newcommand{\ii}{\mathrm{i}}

\newcommand{\comment}[1]{}

\renewcommand{\mid}{\,|\,}

\renewcommand{\leq}{\leqslant}

\newcommand{\proofend}{\hfill\mbox{$\Box$}}

\numberwithin{equation}{section}

\theoremstyle{change} \theorembodyfont{\em}
\newtheorem{Lem}{Lemma.}[section]
\newtheorem{Thm}[Lem]{Theorem.}
\newtheorem{Pro}[Lem]{Proposition.}

\newtheorem{Cor}[Lem]{Corollary.}

\theorembodyfont{\rm}
\newtheorem{Def}[Lem]{Definition.}
\newtheorem{Rem}[Lem]{Remark.}

\def\OnlyOnArXiv#1#2{\ifthenelse{\equal{#1}{Y}}{#2}{}}

\long\def\Eq#1#2{\ifthenelse{\equal{#1}{*}}
  {\begin{equation*}\begin{aligned}#2\end{aligned}\end{equation*}}
  {\begin{equation}\begin{aligned}\label{#1}#2\end{aligned}\end{equation}}}

\newenvironment{proof}{\noindent{\bf Proof.}}{\proofend}

\begin{document}

\begin{center}
 {\bfseries\Large Jensen convex functions and doubly stochastic matrices}

\vspace*{3mm}

{\sc\large
  M\'aty\'as $\text{Barczy}^{*,\orcidlink{0000-0003-3119-7953}}$,
  Zsolt $\text{P\'ales}^{**,\diamond,\orcidlink{0000-0003-2382-6035}}$ }

\end{center}

\vskip0.2cm

\noindent
 * HUN-REN–SZTE Analysis and Applications Research Group,
   Bolyai Institute, University of Szeged,
   Aradi v\'ertan\'uk tere 1, H--6720 Szeged, Hungary.

\noindent
 ** Institute of Mathematics, University of Debrecen,
    Pf.~400, H--4002 Debrecen, Hungary.

\noindent E-mails: barczy@math.u-szeged.hu (M. Barczy),
                  pales@science.unideb.hu  (Zs. P\'ales).

\noindent $\diamond$ Corresponding author.

\vskip0.2cm


{
\renewcommand{\thefootnote}{}
\footnote{\textit{2020 Mathematics Subject Classifications\/}:
 39B62, 26A51.}
\footnote{\textit{Key words and phrases\/}:
  Jensen convex function, functional inequality, doubly stochastic matrix, circulant matrix, Schur convexity.}
\vspace*{0.2cm}
}

\vspace*{-10mm}

\begin{abstract}
Given an $n\times n$ doubly stochastic matrix $P$ satisfying an appropriate condition of linear algebraic-type, and a function $f$ defined on a nonempty interval, we show that the validity of a convexity-type functional
 inequality for $f$ in terms $P$ implies that $f$ is Jensen convex.
We also prove that if $f$ is convex, then the functional inequality in question holds for all doubly stochastic matrices of any order.
The particular case when the doubly stochastic matrix is a circulant one is also considered.
\end{abstract}


\section{Introduction}
\label{section_intro}

The theory of convex functions is an old and important field of classical analysis,
 and it has a large number of applications in many branches of mathematics. 
For a recent book containing numerous aspects, generalizations and applications connected to convex functions, 
 see, e.g., Niculescu and Persson \cite{NicLar}. 
In this paper, we establish some new connections between Jensen convex functions defined on a nonempty open interval  
 and doubly stochastic matrices in terms of a functional inequality of convexity type,
 and we also specialize our results to the setting of doubly stochastic circulant matrices.

For a comprehensive treatment of doubly stochastic matrices and Schur convex functions, we refer to the monograph \cite{MarOlk} by Marshall and Olkin.
 Concerning some recent inequalities for convex functions and doubly stochastic matrices,
 we can mention the paper \cite{Nie} of Niezgoda (see also the subsequent ones that can be found in the literature). 
According to our knowledge, in all of these papers,
 given a convex function and a doubly stochastic matrix, 
Niezgoda derived some inequalities for the given convex function, but we could not find any result for the reversed directions which would be similar to the assertion (i) of our main Theorem \ref{Thm_Jensen_convexity_with_DS_weights}.
 
Our paper is organized as follows.
In Section \ref{Sec_Jensen_convex_doubly_stoch}, as the main result of the paper, 
 we show that, given an $n\times n$ doubly stochastic matrix $P$ satisfying a certain condition of linear algebraic type, 
 and a function $f$ defined on a nonempty interval, if a convexity-type functional inequality holds for $f$ in terms of $P$, then $f$ has to be Jensen convex
 (see part (i) of our main Theorem \ref{Thm_Jensen_convexity_with_DS_weights}). 
Furthermore, we also prove that if $f$ is convex, then the functional inequality in question holds for all doubly stochastic matrices of any order 
 (see part (ii) of Theorem \ref{Thm_Jensen_convexity_with_DS_weights}). 
In Remark \ref{Rem_Schur}, we point out a connection between Theorem \ref{Thm_Jensen_convexity_with_DS_weights}
 and the notion of Schur convexity with respect to the doubly stochastic matrix with identical entries.
Section \ref{Sec_Jensen_convex_circulant} is devoted to derive some corollaries of Theorem \ref{Thm_Jensen_convexity_with_DS_weights} for doubly stochastic circulant matrices
 (see Proposition \ref{Pro_Jensen_convexity_with_circulant_weights} and Corollary \ref{Cor_quasi_arithmetic}). 
We also study in detail the cases corresponding to
 $2\times 2$, $3\times 3$ and $4\times 4$ doubly stochastic circulant matrices whose entries are not identically equal to each other, respectively
 (see Corollaries \ref{Cor_n=2}, \ref{Cor_n=3} and Proposition \ref{Pro_circulant_n=4}).
 
Throughout this paper, let $\NN$, $\ZZ_+$, $\RR$ and $\RR_+$ denote the sets of positive integers, non-negative integers, real numbers and  non-negative real numbers, respectively.
For a finite subset $A$ of $\NN$, its cardinality is denoted by $\vert A \vert$.
An interval $I\subseteq\RR$ will be called nondegenerate if it contains at least two distinct points.
Given a non-degenerate interval $I\subseteq\RR$, the difference $I-I$ stands for the set $\{x-y : x,y\in I\}$.
To avoid misunderstandings, in some case we write $(I-I)$ instead of $I-I$.
The natural basis in $\RR^n$ is denoted by $\{\be_1,\ldots,\be_n\}$, where $n\in\NN$.

Next, we recall the notion of $t$-convex functions (where $t\in[0,1]$).

\begin{Def}\label{Def_convexity}
Let $I\subseteq \RR$ be a nondegenerate interval and $t\in[0,1]$.
 A function $f:I\to\RR$ is called $t$-convex if
 \[
   f(tx+(1-t)y) \leq tf(x) + (1-t)f(y), \qquad x,y\in I.
 \]
If $f$ is $\frac{1}{2}$-convex, then it is called Jensen convex or midpoint convex as well. 
If $f$ is $t$-convex for all $t\in(0,1)$, then it is said to be convex.
Note that every function $f:I\to\RR$ is automatically $0$-convex and $1$-convex as well.
\end{Def}

In the next result, we collect some known implications among the convexity notions introduced in Definition \ref{Def_convexity}.

\begin{Thm}\label{Thm_Convex}
Let $I\subseteq \RR$ be a nonempty open interval and $f:I\to\RR$. Then the following two assertions hold.
\begin{enumerate}[(i)]
 \item If $f$ is $t$-convex for some $t\in(0,1)$, then it is Jensen convex.
 \item If $f$ is $t$-convex for some $t\in(0,1)$ and $f$ is bounded from above on some nonempty open subinterval of $I$, then $f$ is convex and continuous.
\end{enumerate}
\end{Thm}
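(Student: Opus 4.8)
The plan is to establish part (i) first and then deduce part (ii) from (i) together with the classical Bernstein--Doetsch theorem. For part (i), fix $t\in(0,1)$ for which $f$ is $t$-convex and study the set $T_f:=\{s\in[0,1]:f\text{ is }s\text{-convex}\}$, which by Definition \ref{Def_convexity} already contains $0,1,t$. First I would record some elementary closure properties. Interchanging $x$ and $y$ in the defining inequality shows that $s\in T_f$ implies $1-s\in T_f$, so $T_f$ is symmetric about $\tfrac12$. Next, the identity $ss'x+(1-ss')y=s\bigl(s'x+(1-s')y\bigr)+(1-s)y$ lets one apply $s'$-convexity on the chord $[x,y]$ and then $s$-convexity on the chord joining $s'x+(1-s')y$ to $y$, yielding $ss'\in T_f$ for $s,s'\in T_f$. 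More generally, applying $s$-convexity to the chord joining two division points $s'x+(1-s')y$ and $s''x+(1-s'')y$ gives $ss'+(1-s)s''\in T_f$ whenever $s,s',s''\in T_f$; in particular $T_f$ is closed under $s$-weighted averages with $s\in T_f$.

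From these closure properties, starting from the single point $t$, I would show that $T_f$ is dense in $[0,1]$: products give $t^n\in T_f$ with $t^n\to0$, reflection gives $1-t^n\to1$, and closure under $T_f$-weighted averaging then fills $[0,1]$ densely (the two maps $x\mapsto t^n x$ and $x\mapsto t^n+(1-t^n)x$ form a just-touching iterated scheme whose orbit of $\{0,1\}$ is dense). Hence there is a sequence $t_n\in T_f$ with $t_n\to\tfrac12$.

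The crux, and the step I expect to be the main obstacle, is to pass from ``$f$ is $s$-convex for a dense set of $s$'' to ``$f$ is $\tfrac12$-convex.'' This cannot be done by a naive limit, because $T_f$ need not be closed: without any regularity of $f$ the map $s\mapsto f(sx+(1-s)y)$ may be discontinuous, so the inequalities do not survive the passage $t_n\to\tfrac12$. To overcome this I would fix $x,y$, set $m:=\tfrac{x+y}2$ and $\psi(h):=f(m+h)+f(m-h)-2f(m)$, and note that adding the $t$-convexity inequalities for $(x,y)=(m+h,m-h)$ and $(m-h,m+h)$ gives the symmetrized estimate $\psi(\delta h)\le\psi(h)$ with $\delta:=|2t-1|\in(0,1)$, whence $\psi(h)\ge\psi(\delta^n h)$ for all $n$. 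Iterating drives the span to $0$, so it remains only to rule out a downward local spike, i.e.\ to prove $\liminf_{s\to0}\psi(s)\ge0$. This is the delicate point: here one must use the full family of inequalities (for every $s\in T_f$, with the asymmetric division points tending to $m$) to control the local behaviour of $f$ near $m$ and thereby force $\psi(h)\ge0$. Assembling this local control is the analytic heart of the argument, whereas everything preceding it is formal.

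Finally, for part (ii), once (i) is in hand $f$ is Jensen convex, and the extra hypothesis that $f$ be bounded from above on some nonempty open subinterval of $I$ is exactly the hypothesis of the Bernstein--Doetsch theorem. I would invoke it through its three standard steps: Jensen convexity propagates the one-sided local bound to local boundedness (from both sides) throughout $I$; local boundedness together with Jensen convexity forces continuity; and a continuous Jensen convex function is convex, since $f(sx+(1-s)y)\le sf(x)+(1-s)f(y)$ holds for all dyadic $s$ by induction on the midpoint inequality and then for all $s\in(0,1)$ by continuity. This gives that $f$ is convex and continuous, completing the plan.
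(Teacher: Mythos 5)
Your part (ii) is fine: once (i) is available, the Bernstein--Doetsch theorem applied to the Jensen convex function $f$, bounded above on a nonempty open subinterval, gives continuity and convexity exactly as you describe. The problem is part (i), where the step you yourself flag as ``the analytic heart'' is in fact the entire content of the theorem, and your plan contains no idea that would complete it. The closure properties of $T_f$ and its density in $[0,1]$ are correct but cannot, even in principle, yield $\tfrac12\in T_f$ on their own: every weight reachable by finitely many exact applications of $t$-convexity with leaves in $\{x,y\}$ lies in $\ZZ[t]$, which does not contain $\tfrac12$ when $t$ is transcendental, so no amount of algebraic closure bookkeeping produces the midpoint inequality with matching coefficients. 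Your symmetrization $\psi(\delta h)\le\psi(h)$ with $\delta=|2t-1|$ is a correct consequence of $t$-convexity, but the reduction to $\liminf_{h\to0}\psi(h)\ge0$ is not progress: establishing that this $\liminf$ is nonnegative for an arbitrary, possibly nowhere continuous, $t$-convex $f$ is essentially equivalent to the Jensen convexity you are trying to prove, and you give no mechanism for it. As written, the proposal is therefore incomplete at the decisive point.

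The missing idea (part (i) is Kuhn's theorem; the paper itself only cites \cite{BarPal_2025} for the proof) is a self-referential application of the hypothesis in which $f(m)$ appears on both sides of the inequality with a coefficient strictly less than one, so that positivity, not a limit, finishes the job. Fix $x,y\in I$ and set $m:=\tfrac{x+y}{2}$, $p:=tm+(1-t)x$, $q:=ty+(1-t)m$; both lie on the segment joining $x$ and $y$, hence in $I$, and a direct computation gives $tp+(1-t)q=(t^2+(1-t)^2)m+2t(1-t)m=m$. Applying $t$-convexity three times,
$f(m)\le tf(p)+(1-t)f(q)\le t\bigl(tf(m)+(1-t)f(x)\bigr)+(1-t)\bigl(tf(y)+(1-t)f(m)\bigr)=(t^2+(1-t)^2)f(m)+t(1-t)\bigl(f(x)+f(y)\bigr)$,
whence $2t(1-t)f(m)\le t(1-t)\bigl(f(x)+f(y)\bigr)$ and, after dividing by $2t(1-t)>0$, $f(m)\le\tfrac12\bigl(f(x)+f(y)\bigr)$. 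This short identity is what must replace your ``assembling the local control''; without it (or an equivalent device) the proposal does not prove (i).
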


For a proof of Theorem \ref{Thm_Convex}, see the proof of Theorem 1.1 in Barczy and P\'ales \cite{BarPal_2025}.

\section{Jensen convexity and doubly stochastic matrices}\label{Sec_Jensen_convex_doubly_stoch}

As the main result of the paper, we establish a connection between Jensen convex functions and doubly stochastic matrices
in terms of a functional inequality of convexity-type.

\begin{Thm}\label{Thm_Jensen_convexity_with_DS_weights}
Let $I\subseteq \RR$ be a nonempty open interval and $f:I\to\RR$ be a function.
\begin{itemize}
  \item[(i)] Let $n\in\NN$ and let $P=(p_{i,j})\in\RR^{n\times n}$ be a doubly stochastic matrix and assume that there exist disjoint nonempty subsets $A,B\subseteq\{1,\dots,n\}$ such that $\frac{1}{|A|}\sum_{\alpha\in A}\be_\alpha-\frac{1}{|B|}\sum_{\beta\in B}\be_\beta$ belongs to the linear span of the column vectors of $P$.
             If the inequality
  \begin{align}\label{help1_DS}
    f\left(\frac{x_1+\cdots+x_n}{n}\right)
    \leq \frac{1}{n}\sum_{i=1}^{n} f(p_{i,1}x_1+\cdots +p_{i,n}x_n)
  \end{align}
  holds for all $x_1,\ldots,x_n\in I$, then the function $f$ is Jensen convex on $I$.
  \item[(ii)] If the function $f$ is Jensen convex on $I$, then the inequality \eqref{help1_DS} holds for all $n\in\NN$, for all doubly stochastic matrices $P=(p_{i,j})\in\RR^{n\times n}$ and for all $x_1,\ldots,x_n\in I$.
\end{itemize}
\end{Thm}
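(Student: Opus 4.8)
The plan is to prove the two implications separately, and I would start with the easier direction (ii). Assume $f$ is Jensen convex. The only consequence I need is that midpoint convexity implies the discrete Jensen inequality with equal weights, $f\big(\frac1n\sum_{i=1}^n y_i\big)\le\frac1n\sum_{i=1}^n f(y_i)$ for all $y_1,\dots,y_n\in I$, which follows from Jensen convexity by the classical Cauchy forward--backward induction and requires no regularity of $f$. Given a doubly stochastic $P=(p_{i,j})$ and $x_1,\dots,x_n\in I$, set $y_i:=\sum_{j=1}^n p_{i,j}x_j$. Since each row of $P$ sums to $1$ and $I$ is an interval, each $y_i$ is a convex combination of points of $I$ and hence lies in $I$; and since each \emph{column} of $P$ sums to $1$, the averages are preserved, $\frac1n\sum_{i=1}^n y_i=\frac1n\sum_{i=1}^n\sum_{j=1}^n p_{i,j}x_j=\frac1n\sum_{j=1}^n\big(\sum_{i=1}^n p_{i,j}\big)x_j=\frac1n\sum_{j=1}^n x_j$. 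Applying the equal-weight Jensen inequality to $y_1,\dots,y_n$ then gives \eqref{help1_DS} at once.

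For direction (i) I would first translate the linear-algebraic hypothesis. Put $v:=\frac1{|A|}\sum_{\alpha\in A}\be_\alpha-\frac1{|B|}\sum_{\beta\in B}\be_\beta$; the assumption says $Pc=v$ for some $c=(c_1,\dots,c_n)^\top\in\RR^n$. Using $\bone^\top P=\bone^\top$ (all column sums equal $1$), it follows that $\sum_{j=1}^n c_j=\bone^\top c=(\bone^\top P)c=\bone^\top(Pc)=\bone^\top v=|A|\cdot\frac1{|A|}-|B|\cdot\frac1{|B|}=0$. The key step is then to substitute into \eqref{help1_DS} the one-parameter family $x_j:=m+sc_j$, $j=1,\dots,n$, where $m\in I$ is arbitrary and $s\in\RR$ is chosen small enough that every $x_j$ remains in $I$ — possible because $I$ is open and the $c_j$ form a finite set.

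With this substitution the left-hand side of \eqref{help1_DS} reduces to $f(m)$, since $\sum_j c_j=0$ forces $\frac1n\sum_j x_j=m$, while the $i$-th argument on the right equals $\sum_j p_{i,j}(m+sc_j)=m+s v_i$, using $\sum_j p_{i,j}=1$ and $Pc=v$. As $v_i=\frac1{|A|}$ for $i\in A$, $v_i=-\frac1{|B|}$ for $i\in B$, and $v_i=0$ for the remaining indices, grouping the right-hand side and cancelling the $f(m)$-terms contributed by the indices with $v_i=0$ turns \eqref{help1_DS} into
\[
(|A|+|B|)\,f(m)\le|A|\,f\Big(m+\tfrac{s}{|A|}\Big)+|B|\,f\Big(m-\tfrac{s}{|B|}\Big).
\]
Writing $p:=m+\frac{s}{|A|}$ and $q:=m-\frac{s}{|B|}$, one checks $m=\frac{|A|p+|B|q}{|A|+|B|}$, so this is precisely the $t$-convexity inequality $f(tp+(1-t)q)\le tf(p)+(1-t)f(q)$ with $t:=\frac{|A|}{|A|+|B|}\in(0,1)$; letting $m$ and $s$ range, it holds for every pair $p,q$ contained in a sufficiently small subinterval of $I$.

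Hence $f$ is \emph{locally} $t$-convex for the fixed exponent $t\in(0,1)$: each point of $I$ has an open neighborhood on which the $t$-convexity inequality holds for all pairs. Theorem \ref{Thm_Convex}(i) applied on each such neighborhood shows $f$ to be Jensen convex there, i.e.\ locally Jensen convex, and local Jensen convexity on an open interval propagates to global Jensen convexity by the standard localization argument (cover a compact subinterval by finitely many such neighborhoods and chain the midpoint inequalities). I expect this final globalization — upgrading an inequality available only for \emph{close} pairs $p,q$, a restriction forced by the requirement $x_j\in I$, into Jensen convexity on all of $I$ — to be the main obstacle, whereas the reduction to local $t$-convexity is a direct computation.
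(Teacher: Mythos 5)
Your proposal is correct and follows essentially the same route as the paper: both directions reduce to the same computations (column sums preserve the mean for part (ii); a one-parameter substitution built from a preimage of $\frac{1}{|A|}\sum_{\alpha\in A}\be_\alpha-\frac{1}{|B|}\sum_{\beta\in B}\be_\beta$ under $P$ yields local $\frac{|A|}{|A|+|B|}$-convexity, hence local Jensen convexity via Theorem \ref{Thm_Convex}(i), for part (i)). The only caveat is that the final globalization step is not quite the routine ``chain the midpoint inequalities'' you sketch: the localizability of Jensen convexity for functions with no regularity is a genuine theorem, which the paper cites from Gil\'anyi--P\'ales and Nikodem--P\'ales rather than reproving.
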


\begin{proof}
First, we prove part (i). Let $n\in\NN$, let $P=(p_{i,j})\in\RR^{n\times n}$ be a doubly stochastic matrix, and let $A,B\subseteq\{1,\dots,n\}$ disjoint nonempty sets such that $\frac{1}{|A|}\sum_{\alpha\in A}\be_\alpha-\frac{1}{|B|}\sum_{\beta\in B}\be_\beta$ belongs to the linear span of the column vectors of $P$.
Therefore, there exists $\bx_{A,B}\in\RR^n$ such that $\frac{1}{|A|}\sum_{\alpha\in A}\be_\alpha-\frac{1}{|B|}\sum_{\beta\in B}\be_\beta=P{\bx_{A,B}}$. 
For the sake of simplicity, denote $a:=|A|$ and $b:=|B|$.
Suppose that \eqref{help1_DS} holds for all $x_1,\ldots,x_n\in I$.

Given $\bx=(x_1,\ldots,x_n)\in I^n$, introduce the vector  $\by=(y_1,\dots,y_n)$ by
\Eq{x-to-y}{
  y_i:=p_{i,1}x_1+\cdots +p_{i,n}x_n, \qquad i\in\{1,\dots,n\}.
}
In other words, $\by=P\bx$.
Using that the rows of $P$ are finite probability distributions, it follows that $y_i$ is a convex combination of $x_1,\dots,x_n$ for each $i\in\{1,\dots,n\}$, and hence $y_i\in I$, $i\in\{1,\dots,n\}$, in other words, $\by\in I^n$.

Using also that the column-sums of $P$ are equal to $1$, we have that
\Eq{*}{
  y_1+\cdots+y_n=x_1+\cdots+x_n,
}
and hence \eqref{help1_DS} can be rewritten as
\Eq{help1_y}{
  f\left( \frac{y_1+\cdots+y_n}{n} \right)
       \leq \frac{1}{n}\sum_{i=1}^{n}f(y_i)
}
for all $\by=(y_1,\dots,y_n)\in \{P\bx\mid\bx\in I^n\}\subseteq I^n$.

We are going to show that $f$ is locally Jensen convex on $I$, i.e., for all $p\in I$, there exists $r>0$ such that $(p-r,p+r)\subseteq I$ and $f$ is Jensen convex on $(p-r,p+r)$. Let $p\in I$ be fixed.

For $u,v\in I$, define
\Eq{help_y_def}{
   y_i:=\begin{cases}
        u & \text{if } i\in A,\\
        v & \text{if } i\in B,\\[1mm]
        \dfrac{au+bv}{a+b} & \text{if } i\in \{1,\dots,n\}\setminus(A\cup B).
        \end{cases}
}
Using that the map $\RR^n\ni\bx\mapsto P\bx$ is linear and the diagonal elements of $I^n$ are fixed points of this map, we can obtain that
\begin{align}\label{help13}
 \begin{split}
  (y_1,\dots,y_n)
  &=\frac{au+bv}{a+b}(\be_1+\dots+\be_n)
   +\frac{ab(u-v)}{a+b}\bigg(\frac{1}{a}\sum_{\alpha\in A}\be_\alpha-\frac{1}{b}\sum_{\beta\in B}\be_\beta\bigg)\\
  &=\frac{au+bv}{a+b} P( \be_1+\dots+\be_n )
          + \frac{ab(u-v)}{a+b} P\bx_{A,B}\\
  &=P\bigg(\frac{au+bv}{a+b}(\be_1+\dots+\be_n)
  +\frac{ab(u-v)}{a+b}\bx_{A,B}\bigg).
 \end{split}
 \end{align}
Further, we have that
\Eq{*}{
  \lim_{(u,v)\to(p,p)}
   \bigg(\frac{au+bv}{a+b}(\be_1+\dots+\be_n)
  +\frac{ab(u-v)}{a+b}\bx_{A,B}\bigg)
  =p(\be_1+\dots+\be_n)=(p,\dots,p)\in I^n.
}
Consequently, since $I$ and $I^n$ are open, there exists $r>0$ such that $(p-r,p+r)\subseteq I$ and
\Eq{*}{
   \frac{au+bv}{a+b}(\be_1+\dots+\be_n)
  +\frac{ab(u-v)}{a+b}\bx_{A,B}\in I^n
  \qquad \text{for all $u,v\in(p-r,p+r)$},
}
and hence, by \eqref{help13},
\Eq{*}{
  (y_1,\dots,y_n)\in \{P\bx\mid\bx\in I^n\} \qquad \text{for all $u,v\in(p-r,p+r)$,}
}
where $(y_1,\ldots,y_n)$ is defined in \eqref{help_y_def}.
We have that
\Eq{*}{
   \frac{y_1+\dots+y_n}{n}
   =\frac{1}{n}\bigg(au+bv+(n-a-b)\frac{au+bv}{a+b}\bigg)
   =\frac{au+bv}{a+b}.
}
Using this identity, the inequality \eqref{help1_y}
takes the form
 \[
 f\left( \frac{au+bv}{a+b} \right)
       \leq \frac{1}{n}\bigg(af(u)+bf(v)
       +(n-a-b)f\left(\frac{au+bv}{a+b} \right)\bigg),
 \]
which reduces to
 \Eq{2L+}{
  f\left(\frac{au+bv}{a+b}\right)
       \leq \frac{af(u)+bf(v)}{a+b}
 }
for all $u,v\in (p-r,p+r)$, that is, $f$ is $\frac{a}{a+b}$-convex on $(p-r,p+r)$, where $\frac{a}{a+b}\in(0,1)$.
According to part (i) of Theorem \ref{Thm_Convex}, it follows that $f$ is Jensen convex on $(p-r,p+r)$.

All in all, we have proved that, for all $p\in I$, there exists $r>0$ such that $(p-r,p+r)\subseteq I$ and
$f$ is Jensen convex on $(p-r,p+r)$,
in other words, $f$ is locally Jensen convex on $I$.
This implies that $f$ is Jensen convex on $I$, i.e.,
\[
   f\left(\frac{u+v}{2}\right)
       \leq \frac{f(u)+f(v)}{2},\qquad u,v\in I,
\]
see, for example, Corollary 2 with the choices $n=2$ and $t_1=t_2=1$ in Gil\'anyi and P\'ales \cite{GilPal01} or Corollary 6 with the choice $t=\frac{1}{2}$ in Nikodem and P\'ales \cite{NikPal03}
(which, roughly speaking, states that Jensen convexity is a localizable convexity property).

Finally, we prove part (ii).
Suppose that the function $f:I\to\RR$ is Jensen convex.
Then, for all $n\in\NN$, for all doubly stochastic matrices $P=(p_{i,j})\in\RR^{n\times n}$ and for all $x_1,\ldots,x_n\in I$,
 we have that
  \begin{align*}
          \sum_{i=1}^n  (p_{i,1}x_1+\cdots +p_{i,n}x_n)
           =  x_1+\cdots + x_n,
         \end{align*}
 yielding that
 \begin{align*}
   \frac{1}{n} \sum_{k=1}^n x_k
                            =   \frac{1}{n} \sum_{i=1}^n  (p_{i,1}x_1+\cdots +p_{i,n}x_n).
 \end{align*}
Now the Jensen convexity of $f$ together with Lemma 5.3.1 in Kuczma \cite{Kuc2009}
 imply that the inequality \eqref{help1_DS} holds.
\end{proof}

In the next remark, among others, we provide a sufficient condition under which the condition on $P$ in part (i) of Theorem \ref{Thm_Jensen_convexity_with_DS_weights} holds, and we also formulate an open problem.

\begin{Rem}\label{Rem_Jensen_convexity_with_DS_weights}
(i). If $P\in\RR^{n\times n}$ is an invertible doubly stochastic matrix, then, for all disjoint nonempty subsets $A,B\subseteq\{1,\dots,n\}$, we have that $\frac{1}{|A|}\sum_{\alpha\in A}\be_\alpha-\frac{1}{|B|}\sum_{\beta\in B}\be_\beta$ belongs to the linear span of the column vectors of $P$, i.e., the condition in part (i) of Theorem \ref{Thm_Jensen_convexity_with_DS_weights} holds trivially.
Note also that, for any disjoint nonempty subsets $A,B\subseteq\{1,\ldots,n\}$, 
 the vectors $\frac{1}{|A|}\sum_{\alpha\in A}\be_\alpha-\frac{1}{|B|}\sum_{\beta\in B}\be_\beta$ and $\be_1+\cdots+\be_n$ are linearly independent, and $\be_1+\cdots+\be_n$ belongs to the linear span of the column vectors of $P$ (since the row-sums of $P$ are equal to $1$).
Consequently, under the condition on $P$ in part (i) of Theorem \ref{Thm_Jensen_convexity_with_DS_weights},
 the rank of $P$ is at least $2$.
However, we do not know whether the assertion in part (i) of Theorem \ref{Thm_Jensen_convexity_with_DS_weights} 
 holds for every doubly stochastic matrix having rank at least 2.
We leave this question as an open problem.

(ii).
If $n\in\NN$, then, by the trinomial theorem, the number of ways how one can choose disjoint (possibly empty) subsets $A$ and $B$ of $\{1,\ldots,n\}$ is
 \[
 \sum_{a,b\in\ZZ_+\,:\,a+b\leq n} \binom{n}{a}\binom{n-a}{b}
      =  \sum_{a,b\in\ZZ_+\,:\,a+b\leq n} \frac{n!}{a!b!(n-a-b)!} 1^a 1^b 1^{n-a-b}
      =(1+1+1)^3=3^n.
 \]
On the other hand, the number of ways how one can choose a (possibly empty) subset ($A$ or $B$) of $\{1,\ldots,n\}$ is $\sum_{a=0}^n \binom{n}{a} = 2^n$.
Therefore, by the inclusion–exclusion principle (which is sometimes referred to as the sieve formula), the cardinality of the set of vectors
$\frac{1}{|A|}\sum_{\alpha\in A}\be_\alpha-\frac{1}{|B|}\sum_{\beta\in B}\be_\beta$ that satisfy the assumption in part (i) of Theorem \ref{Thm_Jensen_convexity_with_DS_weights}
 is $3^n-2\cdot2^n+1$, which is of order $3^n$.

(iii) If the assumptions of part (i) of Theorem \ref{Thm_Jensen_convexity_with_DS_weights} hold, $f$ is bounded from above on some nonempty open subset of $I$ and the inequality \eqref{help1_DS} is satisfied for all $x_1,\ldots,x_n\in I$, then $f$ is convex on $I$.
This statement easily follows from part (i) of Theorem \ref{Thm_Jensen_convexity_with_DS_weights} combined with part (ii) of Theorem \ref{Thm_Convex}.
\proofend
\end{Rem}

The next remark is devoted to give a reformulation of the inequality \eqref{help1_DS} in Theorem \ref{Thm_Jensen_convexity_with_DS_weights},
 and to point out a connection between Theorem \ref{Thm_Jensen_convexity_with_DS_weights} and 
 the notion of Schur convexity with respect to the doubly stochastic matrix with identical entries.

\begin{Rem}\label{Rem_Schur}
Let $I\subseteq \RR$ be a nonempty open interval, $n\in\NN$, 
 and $f:I\to\RR$ be a function.
Let us define $F:I^n\to\RR$, 
\Eq{F_def}{
   F(x_1,\dots,x_n):=f(x_1)+\dots+f(x_n),\qquad (x_1,\ldots,x_n)\in I^n.
 }
Let $S_n$ denote the doubly stochastic matrix with identical entries equal to $\frac{1}{n}$.  
Then, for a doubly stochastic matrix $P\in\RR^{n\times n}$ and a vector $\bx\in I^n$, the inequality \eqref{help1_DS} in Theorem \ref{Thm_Jensen_convexity_with_DS_weights} 
 can be also written in the form
 \Eq{I}{
    F(S_n \bx)\leq F(P\bx).
 }

We claim that, for an arbitrary function $G:I^n\to \RR$, the inequality
 \Eq{I_b}{
    G(S_n \bx)\leq G(P\bx)
  }
is equivalent to 
 \Eq{III}{
 G(S_n\by)\leq G(\by), \qquad \by\in P(I^n)=\{P\bx\mid \bx\in I^n\}\subseteq I^n.
 }
First, suppose that \eqref{III} holds.
Then, using that $S_nP=S_n$ (since $P$ is doubly stochastic), with $\by:=P\bx$, where $\bx\in I^n$, we get that
 \[
 G(S_n\bx)=G(S_nP\bx)\leq G(P\bx),
 \]
 as desired.
Now suppose that \eqref{I_b} holds. 
Then, using again that $S_nP=S_n$, we have that 
 \[
   G(S_nP\bx)=G(S_n\bx)\leq G(P\bx), \qquad \bx\in I^n,
 \]
 and hence \eqref{III} is valid for all $\by\in P(I^n)$, as desired.
 
Furthermore, if the condition on $P$ in part (i) of Theorem \ref{Thm_Jensen_convexity_with_DS_weights} is satisfied
 and the inequality \eqref{I} holds for all $\bx\in I^n$ (where $F$ is defined by \eqref{F_def}), then 
  $f$ is Jensen convex on $I$ (following from Theorem \ref{Thm_Jensen_convexity_with_DS_weights}), 
  and, hence, using also Lemma 5.3.1 in Kuczma \cite{Kuc2009}, it implies that
 \begin{align}\label{II}
  \begin{split}
        F(S_n\bx)
          & = F\left(\frac{x_1+\cdots +x_n}{n},\ldots,\frac{x_1+\cdots + x_n}{n}\right)
           = nf\left(\frac{x_1+\cdots +x_n}{n}\right) \\
          & \leq n\cdot \frac{1}{n} \sum_{i=1}^n f(x_i)
           = \sum_{i=1}^n f(x_i) 
           = F(\bx), \qquad \bx\in I^n.
 \end{split}          
 \end{align}
According to Burai et al.\ \cite[Definition 3]{BurMakSzo},  if the inequality \eqref{II} holds, then we say that $F$ is $S_n$-Schur convex.
\proofend 
\end{Rem}

\section{Jensen convexity and doubly stochastic circulant matrices}\label{Sec_Jensen_convex_circulant}

In this section, we derive some corollaries of Theorem \ref{Thm_Jensen_convexity_with_DS_weights} for doubly stochastic
 circulant-type matrices (also called left-circulant matrices).
To simplify the formulation of our subsequent results, for each $n\in\NN$,  $i\in\{1,\dots,n\}$ and $j\in\{0,\dots,n-1\}$, let us define the truncated sum $i\oplus j$ of $i$ and $j$ by
       \begin{align}\label{def_truncated_sum}
         i\oplus j
         :=\begin{cases}
          i+j &\mbox{if } i+j\leq n,\\
          i+j-n &\mbox{if } i+j>n.
          \end{cases}
       \end{align}

\begin{Pro}\label{Pro_Jensen_convexity_with_circulant_weights}
Let $I\subseteq \RR$ be a nonempty open interval and $f:I\to\RR$ be a function.
\begin{itemize}
  \item[(i)] Let $n\in\NN$ and denote $\omega_n:=\ee^{\frac{2\pi}{n}\ii}$. Let $\lambda_1,\dots,\lambda_n\in[0,1]$ such that $\lambda_1+\dots+\lambda_n=1$ and
  \Eq{circ_cond}{
    \sum_{j=1}^n\lambda_j\omega_n^{k(j-1)}\neq0, \qquad k\in\{1,\dots,n-1\}.
  }
  If the inequality
             \begin{align}\label{help1_new}
                f\left(\frac{x_1+\cdots+x_n}{n}\right)
                \leq \frac{1}{n}\sum_{j=0}^{n-1} f( \lambda_{1\oplus j}x_1+\cdots+\lambda_{n\oplus j}x_n)
              \end{align}
  holds for all $x_1,\ldots,x_n\in I$, then the function $f$ is Jensen convex on $I$, where $\oplus$ is defined in \eqref{def_truncated_sum}.
  
  \item[(ii)] If the function $f$ is Jensen convex on $I$, then the inequality \eqref{help1_new} holds for all $n\in\NN$, for all $\lambda_1,\dots,\lambda_n\in[0,1]$ with $\lambda_1+\dots+\lambda_n=1$ and $x_1,\ldots,x_n\in I$.
\end{itemize}
\end{Pro}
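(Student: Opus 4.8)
The plan is to read the inequality \eqref{help1_new} as a special case of \eqref{help1_DS} in Theorem \ref{Thm_Jensen_convexity_with_DS_weights}, taking for $P$ the left-circulant doubly stochastic matrix $P=(p_{i,k})\in\RR^{n\times n}$ with $p_{i,k}:=\lambda_{k\oplus(i-1)}$, and then to verify that the spectral condition \eqref{circ_cond} is exactly what makes $P$ invertible, so that the linear-algebraic hypothesis of part (i) of Theorem \ref{Thm_Jensen_convexity_with_DS_weights} becomes available through Remark \ref{Rem_Jensen_convexity_with_DS_weights}(i).

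First I would check that $P$ is doubly stochastic: each of its rows and each of its columns is a cyclic rearrangement of $(\lambda_1,\dots,\lambda_n)$, hence consists of entries in $[0,1]$ whose sum equals $\lambda_1+\dots+\lambda_n=1$. With this choice of $P$, the $i$-th coordinate of $P\bx$ equals $\lambda_{1\oplus(i-1)}x_1+\dots+\lambda_{n\oplus(i-1)}x_n$, so \eqref{help1_new} and \eqref{help1_DS} coincide. In particular, part (ii) follows at once: since $P$ is doubly stochastic for every probability vector $(\lambda_1,\dots,\lambda_n)$ irrespective of \eqref{circ_cond}, part (ii) of Theorem \ref{Thm_Jensen_convexity_with_DS_weights} immediately yields \eqref{help1_new} whenever $f$ is Jensen convex.

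The heart of part (i) is to relate \eqref{circ_cond} to the invertibility of $P$. Writing $\omega_n=\ee^{\frac{2\pi}{n}\ii}$ and $\mu_m:=\sum_{j=1}^n\lambda_j\omega_n^{m(j-1)}$ for $m\in\{0,\dots,n-1\}$, I would introduce the Fourier vectors $\bv_m:=(\omega_n^{(i-1)m})_{i=1}^n\in\CC^n$ and compute, via the cyclic substitution $\ell=k\oplus(i-1)$ together with the fact that the $\lambda_j$ are real, that
\Eq{*}{
  P\bv_m=\mu_m\,\bv_{n-m}, \qquad m\in\{0,\dots,n-1\},
}
with the convention $\bv_n=\bv_0$. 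Since $\{\bv_0,\dots,\bv_{n-1}\}$ is a basis of $\CC^n$, the matrix $P$ leaves invariant each plane $\mathrm{span}\{\bv_m,\bv_{n-m}\}$ (as well as the line $\CC\bv_0$ and, when $n$ is even, the line $\CC\bv_{n/2}$), and, using $\mu_{n-m}=\overline{\mu_m}$, the restriction of $P$ to such a plane has determinant $-|\mu_m|^2$, so it is invertible exactly when $\mu_m\neq0$. Hence $P$ is invertible if and only if $\mu_m\neq0$ for every $m\in\{0,\dots,n-1\}$; because $\mu_0=\lambda_1+\dots+\lambda_n=1\neq0$ always holds, this is precisely condition \eqref{circ_cond}.

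Once the invertibility of $P$ is established, Remark \ref{Rem_Jensen_convexity_with_DS_weights}(i) guarantees that for every pair of disjoint nonempty subsets $A,B\subseteq\{1,\dots,n\}$ the vector $\frac{1}{|A|}\sum_{\alpha\in A}\be_\alpha-\frac{1}{|B|}\sum_{\beta\in B}\be_\beta$ lies in the linear span of the column vectors of $P$, so part (i) of Theorem \ref{Thm_Jensen_convexity_with_DS_weights} applies and forces $f$ to be Jensen convex on $I$. I expect the main obstacle to be the eigenstructure identity $P\bv_m=\mu_m\bv_{n-m}$: the index bookkeeping for the truncated sum $\oplus$ must be handled carefully, and one must exploit the pairing $m\leftrightarrow n-m$ that is characteristic of left-circulant matrices (whose Fourier vectors are \emph{not} eigenvectors, in contrast to ordinary circulant matrices) in order to read off invertibility correctly from the values $\mu_m$.
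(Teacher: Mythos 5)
Your proposal is correct and follows essentially the same route as the paper: both reduce \eqref{help1_new} to \eqref{help1_DS} via the left-circulant doubly stochastic matrix $P=(\lambda_{i\oplus(k-1)})_{i,k}$, check the reindexing of the sum over the rows, and invoke Theorem \ref{Thm_Jensen_convexity_with_DS_weights} together with Remark \ref{Rem_Jensen_convexity_with_DS_weights}(i) after identifying \eqref{circ_cond} with the invertibility of $P$. The only divergence is that where the paper cites Carmona et al.\ \cite{CarEncGagJim} for the invertibility criterion of a left-circulant matrix, you prove it directly through the (correct) identity $P\bv_m=\mu_m\bv_{n-m}$ for the Fourier vectors, which makes the argument self-contained but does not change its structure.
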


\begin{proof}
Let $n\in\NN$ and $\lambda_1,\dots,\lambda_n\in[0,1]$ be such that $\lambda_1+\dots+\lambda_n=1$.
Let us define
 \begin{align}\label{help_circulant_P}
  P:= (\lambda_{i\oplus j})_{i=1,\ldots,n}^{j=0,\ldots,n-1}
    = \begin{bmatrix}
        \lambda_1 &\lambda_2 & \lambda_3 & \cdots &\lambda_{n-1}  & \lambda_n \\
        \lambda_2 & \lambda_3 & \lambda_4 & \cdots &  \lambda_n & \lambda_1 \\
        \vdots & \vdots & \vdots & \vdots & \vdots & \vdots \\
        \lambda_n & \lambda_1 & \lambda_2 & \cdots & \lambda_{n-2}& \lambda_{n-1} \\
      \end{bmatrix}
      \in\RR^{n\times n}.
 \end{align}
Then $P$ is a left circulant matrix with parameters $\lambda_1,\ldots,\lambda_n$, see Carmona et al. \cite[formula (5)]{CarEncGagJim}.
By part (vii) of Lemma 1.1 and Lemma 1.2 in Carmona et al.\ \cite{CarEncGagJim},
 we get that $P$ is invertible if and only if $\sum_{j=1}^n \lambda_j\omega_n^{k(j-1)}\ne 0$, $k\in\{0,\ldots,n-1\}$.
Since $\lambda_1+\cdots+\lambda_n=1$, it yields that $P$ is invertible if and only if \eqref{circ_cond} holds.
Further, $P$ is doubly stochastic as well due to that $\lambda_1+\cdots+\lambda_n=1$. Moreover, we have that 
 \begin{align*}
   \sum_{i=1}^n f\big( \lambda_{i\oplus 0} x_1 + \cdots + \lambda_{i\oplus (n-1)} x_n \big)
     & = \sum_{j=0}^{n-1} f\big( \lambda_{(j+1)\oplus 0} x_1 + \cdots + \lambda_{(j+1)\oplus (n-1)} x_n \big)\\
     & = \sum_{j=0}^{n-1} f\big( \lambda_{1\oplus j} x_1 + \cdots + \lambda_{n\oplus j} x_n \big).
 \end{align*}
Therefore the statement follows from Theorem \ref{Thm_Jensen_convexity_with_DS_weights}
 taking into account part (i) of Remark \ref{Rem_Jensen_convexity_with_DS_weights} as well.
\end{proof}

In the first part of the next remark, we point out the fact that
 if, in addition, $f$ is bounded from above on some nonempty open subset of $I$,
 then in part (i) of Proposition \ref{Pro_Jensen_convexity_with_circulant_weights}
 we can obtain that $f$ is convex on $I$.
The second part of next remark is about the case when the condition \eqref{circ_cond} does not hold.

\begin{Rem}\label{Rem_circular_doubly_stochastic}
(i). If the assumptions of part (i) of Proposition \ref{Pro_Jensen_convexity_with_circulant_weights} hold,
 $f$ is bounded from above on some nonempty open subset of $I$ and the inequality \eqref{help1_new} is satisfied
 for all $x_1,\ldots,x_n\in I$, then $f$ is convex on $I$.
This statement easily follows from part (i) of Proposition \ref{Pro_Jensen_convexity_with_circulant_weights} combined with part (ii) of Theorem \ref{Thm_Convex}.

(ii).
Let $I\subseteq \RR$ be a nonempty open interval, $f:I\to\RR$ be a function,
 $n\in\NN\setminus\{1\}$, and $\lambda_1,\dots,\lambda_n\in[0,1]$ be such that $\lambda_1+\dots+\lambda_n=1$.
Let us consider the matrix $P$ given by \eqref{help_circulant_P}.
By the proof of Proposition \ref{Pro_Jensen_convexity_with_circulant_weights},
 $P$ is not invertible if and only if
 the condition \eqref{circ_cond} does not hold, i.e., the set
 \Eq{*}{
    K:=\bigg\{k\in\{1,\dots,n-1\}\colon\sum_{j=1}^n\lambda_j\omega_n^{k(j-1)}=0\bigg\}
 }
 is nonempty.
Since
\Eq{*}{
  \omega_n^{k(j-1)}=\exp\left(\frac{2k(j-1)\pi}{n}\ii\right)
  =\cos\left(\frac{2k(j-1)\pi}{n}\right)+
  \ii\sin\left(\frac{2k(j-1)\pi}{n}\right),\quad j\in\{1,\ldots,n\},
}
 we get that
\Eq{help_circulant_spec_1}{
    \sum_{j=1}^n\lambda_j\cos\left(\frac{2k(j-1)\pi}{n}\right)=0,\qquad \sum_{j=1}^n\lambda_j\sin\left(\frac{2k(j-1)\pi}{n}\right)=0,
    \qquad k\in K.
}
Observe that, if $n$ is even and $k=n/2$, then the second condition in \eqref{help_circulant_spec_1} is trivial, 
 while the first condition in \eqref{help_circulant_spec_1} together with $\lambda_1+\dots+\lambda_n=1$ yields
 \begin{align}\label{help_circ_doubly_stoch_1}
  \lambda_1+\lambda_3+\dots+\lambda_{n-1}
  =\lambda_2+\lambda_4+\dots+\lambda_{n}=\frac12.
 \end{align}
\proofend
\end{Rem}

Next, we present a corollary of Proposition \ref{Pro_Jensen_convexity_with_circulant_weights}.  

\begin{Cor}\label{Cor_quasi_arithmetic}
Let $I\subseteq \RR$ be a nonempty open interval and $f:I\to\RR$ be a function.
\begin{itemize}
  \item[(i)] Let $n\in\NN\setminus\{1\}$.
             If the inequality
             \begin{align}\label{help1}
                \begin{split}
              f\bigg(\frac{x_1+\cdots+x_n}{n} \bigg)
                  \leq \frac{1}{n}\sum_{i=1}^n f\bigg(\frac{x_1+\cdots+x_n - x_i}{n-1} \bigg) 
                \end{split}
              \end{align}
             holds for all $x_1,\ldots,x_n\in I$, then the function $f$ is Jensen convex on $I$.
  \item[(ii)] If the function $f$ is Jensen convex on $I$, then the inequality \eqref{help1} holds for all $n\in\NN\setminus\{1\}$ and $x_1,\ldots,x_n\in I$.
 \end{itemize}
\end{Cor}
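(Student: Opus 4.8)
The plan is to deduce Corollary \ref{Cor_quasi_arithmetic} from Proposition \ref{Pro_Jensen_convexity_with_circulant_weights} by specializing the weight vector $(\lambda_1,\dots,\lambda_n)$ to a single well-chosen one. Since $n\in\NN\setminus\{1\}$, the quantity $\frac{1}{n-1}$ is well defined, and I would set $\lambda_1:=0$ and $\lambda_2=\dots=\lambda_n:=\frac{1}{n-1}$. These numbers lie in $[0,1]$ and satisfy $\lambda_1+\dots+\lambda_n=0+(n-1)\cdot\frac{1}{n-1}=1$, so they are admissible in both parts of the proposition.

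First I would identify the right-hand side of \eqref{help1_new} for this choice with that of \eqref{help1}. The $j$-th summand of \eqref{help1_new} is $f(\lambda_{1\oplus j}x_1+\cdots+\lambda_{n\oplus j}x_n)$, and among the indices $1\oplus j,\dots,n\oplus j$ (a permutation of $\{1,\dots,n\}$ by the definition \eqref{def_truncated_sum} of $\oplus$) exactly one equals $1$. Hence, for each fixed $j$, precisely one coefficient vanishes while the other $n-1$ equal $\frac{1}{n-1}$, so the summand is a leave-one-out average $\frac{x_1+\cdots+x_n-x_i}{n-1}$. Moreover, as $j$ ranges over $\{0,\dots,n-1\}$ the location of the vanishing coefficient ranges over all of $\{1,\dots,n\}$ exactly once, so the $n$ summands in \eqref{help1_new} are precisely the $n$ distinct leave-one-out averages appearing in \eqref{help1}. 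Consequently \eqref{help1_new} coincides with \eqref{help1} for these weights.

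Next I would check the nonvanishing condition \eqref{circ_cond}, which is what permits invoking part (i). For $k\in\{1,\dots,n-1\}$, writing $m:=j-1$,
\[
  \sum_{j=1}^n\lambda_j\omega_n^{k(j-1)}
   =\frac{1}{n-1}\sum_{m=1}^{n-1}\omega_n^{km}
   =\frac{1}{n-1}\Bigl(\sum_{m=0}^{n-1}\omega_n^{km}-1\Bigr)
   =-\frac{1}{n-1}\neq 0,
\]
because $\omega_n^{k}$ is an $n$-th root of unity different from $1$, so the complete geometric sum $\sum_{m=0}^{n-1}\omega_n^{km}$ vanishes. Thus \eqref{circ_cond} holds.

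With these observations, part (i) follows at once: the hypothesis of the corollary is exactly \eqref{help1_new} for the chosen weights, and by part (i) of Proposition \ref{Pro_Jensen_convexity_with_circulant_weights} it forces $f$ to be Jensen convex. Part (ii) is even more direct, since part (ii) of Proposition \ref{Pro_Jensen_convexity_with_circulant_weights} delivers \eqref{help1_new} for every admissible weight vector with no further condition; specializing to the chosen weights yields \eqref{help1} for all $n\in\NN\setminus\{1\}$ and all $x_1,\dots,x_n\in I$. The only point demanding care is the combinatorial bookkeeping in the second paragraph, namely that the circulant shift distributes the single zero coefficient across all $n$ columns exactly once so that the $n$ summands are genuinely the $n$ distinct leave-one-out averages; the root-of-unity computation verifying \eqref{circ_cond} is routine.
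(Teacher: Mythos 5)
Your proposal is correct and follows essentially the same route as the paper: specialize Proposition \ref{Pro_Jensen_convexity_with_circulant_weights} to a weight vector with one zero entry and the remaining entries equal to $\frac{1}{n-1}$, verify \eqref{circ_cond} via a geometric sum of roots of unity, and match the circulant shifts with the $n$ leave-one-out averages. The only (immaterial) differences are that the paper places the zero weight at $\lambda_n$ rather than $\lambda_1$ and evaluates the geometric sum by the closed-form quotient instead of subtracting $1$ from the vanishing full sum.
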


\begin{proof}
First, we prove part (i). Let $n\in\NN\setminus\{1\}$.
Let us apply part (i) of Proposition \ref{Pro_Jensen_convexity_with_circulant_weights} with choices 
  $\lambda_1=\cdots=\lambda_{n-1}:=\frac{1}{n-1}$ and $\lambda_n:=0$.
Then, for each $k\in\{1,\ldots,n-1\}$, we have that
 \begin{align*}
   \sum_{j=1}^n\lambda_j\omega_n^{k(j-1)}
    = \frac{1}{n-1} \sum_{j=1}^{n-1} \ee^{\frac{2\pi}{n}k(j-1)\ii}
    = \frac{1}{n-1} \frac{\ee^{\frac{2\pi}{n}k(n-1)\ii} -1}{\ee^{\frac{2\pi}{n}k\ii} - 1 }
    \ne 0,
 \end{align*}
 yielding that condition \eqref{circ_cond} holds.
Further, for all $x_1,\ldots,x_n\in I$ and each $j\in\{0,\ldots,n-1\}$, we have that
 \[
   \lambda_{1\oplus j}x_1+\cdots+\lambda_{n\oplus j}x_n = \frac{x_1+\cdots+x_n-x_{j_*}}{n-1},
 \]
 where $j_*\in\{1,\ldots,n\}$ is such that $j_*\oplus j=n$.
Taking into account that $\{x_{j^*} : j\in\{0,\ldots,n-1\}\} = \{x_1,\ldots,x_n\}$,
 part (i) of Proposition \ref{Pro_Jensen_convexity_with_circulant_weights} implies part (i).

The second assertion follows immediately from assertion (ii) of Proposition \ref{Pro_Jensen_convexity_with_circulant_weights}with choices $\lambda_1=\cdots=\lambda_{n-1}:=\frac{1}{n-1}$ and $\lambda_n:=0$.
\end{proof}

\begin{Rem}
If the assumptions of Corollary \ref{Cor_quasi_arithmetic} hold,
 $f$ is bounded from above on some nonempty open subset of $I$, $n\in\NN\setminus\{1\}$, 
 and the inequality \eqref{help1} is satisfied
 for all $x_1,\ldots,x_n\in I$, then $f$ is convex on $I$.
 Indeed, by part (i) of Corollary \ref{Cor_quasi_arithmetic}, $f$ is Jensen convex on $I$,
 and, using part (ii) of Theorem \ref{Thm_Convex}, it implies that $f$ is convex on $I$, as desired.
\proofend
\end{Rem}

Next, we specialize Proposition \ref{Pro_Jensen_convexity_with_circulant_weights} to the cases $n=2$ and $n=3$, respectively.

\begin{Cor}\label{Cor_n=2}
Let $I\subseteq \RR$ be a nonempty open interval and $f:I\to\RR$ be a function.
Let $\lambda_1,\lambda_2\in[0,1]$ be such that $\lambda_1+\lambda_2=1$.
\begin{itemize}
  \item[(i)] The condition \eqref{circ_cond} in case of $n=2$ is equivalent to
             $\lambda_1\ne\lambda_2$, i.e., not both of $\lambda_1$ and $\lambda_2$ are equal to $\frac{1}{2}$.             
   \item[(ii)] If $\lambda_1\ne \lambda_2$ and the inequality \eqref{help1_new} with $n=2$
              holds for all $x_1,x_2\in I$, i.e.,
              \[
               f\left(\frac{x_1+x_2}{2}\right)
                  \leq \frac{1}{2}\Big( f(\lambda_1x_1+\lambda_2x_2) + f(\lambda_2x_1+\lambda_1x_2) \Big),
                  \qquad x_1,x_2\in I, 
              \]
              then the function $f$ is Jensen convex on $I$.
\end{itemize}
\end{Cor}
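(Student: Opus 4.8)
The plan is to handle the two parts in sequence, with part (i) being a direct evaluation of the spectral condition and part (ii) an immediate application of Proposition \ref{Pro_Jensen_convexity_with_circulant_weights}.

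For part (i), I would specialize the condition \eqref{circ_cond} to $n=2$. Here $\omega_2=\ee^{\frac{2\pi}{2}\ii}=\ee^{\pi\ii}=-1$, and the index set $\{1,\dots,n-1\}$ collapses to the single value $k=1$. Substituting, the only requirement becomes
\[
   \lambda_1\omega_2^{0}+\lambda_2\omega_2^{1}=\lambda_1-\lambda_2\ne 0,
\]
so that \eqref{circ_cond} holds for $n=2$ precisely when $\lambda_1\ne\lambda_2$. Since $\lambda_1+\lambda_2=1$, the equality $\lambda_1=\lambda_2$ is equivalent to $\lambda_1=\lambda_2=\frac12$, which furnishes the reformulation asserted in (i).

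For part (ii), I would first verify that the displayed two-term inequality is exactly the inequality \eqref{help1_new} specialized to $n=2$. Using the definition \eqref{def_truncated_sum} of $\oplus$, for $j=0$ one has $\lambda_{1\oplus 0}=\lambda_1$ and $\lambda_{2\oplus 0}=\lambda_2$, while for $j=1$ one has $\lambda_{1\oplus 1}=\lambda_2$ and, because $2+1>2$, $\lambda_{2\oplus 1}=\lambda_1$. Hence the two summands on the right-hand side are $f(\lambda_1 x_1+\lambda_2 x_2)$ and $f(\lambda_2 x_1+\lambda_1 x_2)$, matching the stated inequality. Given $\lambda_1\ne\lambda_2$, part (i) guarantees that \eqref{circ_cond} holds for $n=2$, so part (i) of Proposition \ref{Pro_Jensen_convexity_with_circulant_weights} applies directly and yields the Jensen convexity of $f$ on $I$.

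I do not expect any substantial obstacle, since the corollary is a pure specialization of Proposition \ref{Pro_Jensen_convexity_with_circulant_weights}. The only step requiring a modicum of care is the correct evaluation of the truncated sum $2\oplus 1$ (where $i+j$ exceeds $n=2$), as this determines the precise coefficients in the second summand and hence the exact form of the inequality being assumed.
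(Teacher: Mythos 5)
Your proposal is correct and follows essentially the same route as the paper: evaluating $\omega_2=-1$ to reduce \eqref{circ_cond} to $\lambda_1\ne\lambda_2$, and then invoking part (i) of Proposition \ref{Pro_Jensen_convexity_with_circulant_weights}. The explicit check of the truncated sums $1\oplus 1$ and $2\oplus 1$ is a small extra verification the paper leaves implicit, but it changes nothing substantive.
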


\begin{proof}
(i). Since $\omega_2=\ee^{\pi \ii} = -1$, the condition \eqref{circ_cond} in case of $n=2$ is equivalent to 
 $\lambda_1-\lambda_2\ne 0$, i.e., $\lambda_1\ne \lambda_2$.
Since $\lambda_1+\lambda_2=1$, we have that $\lambda_1\ne \lambda_2$ is equivalent to the fact that  
 not both of $\lambda_1$ and $\lambda_2$ are equal to $\frac{1}{2}$.

(ii). Assume that $\lambda_1\ne \lambda_2$ .
Then, by part (i), the condition \eqref{circ_cond} in case of $n=2$ holds, 
 and hence part (i) of Proposition \ref{Pro_Jensen_convexity_with_circulant_weights} yields the assertion.
\end{proof}

\begin{Cor}\label{Cor_n=3}
Let $I\subseteq \RR$ be a nonempty open interval and $f:I\to\RR$ be a function.
Let $\lambda_1,\lambda_2,\lambda_3\in[0,1]$ be such that $\lambda_1+\lambda_2+\lambda_3=1$.
\begin{itemize}
  \item[(i)] The condition \eqref{circ_cond} in case of $n=3$ is equivalent that
             $\lambda_1,\lambda_2,\lambda_3$ are not all equal to each other (i.e., 
             not all of them are equal to $\frac{1}{3}$).             
   \item[(ii)] If $\lambda_1,\lambda_2,\lambda_3$ are not all equal to each other and the inequality \eqref{help1_new} with $n=3$
              holds for all $x_1,x_2,x_3\in I$, then the function $f$ is Jensen convex on $I$.
\end{itemize}
\end{Cor}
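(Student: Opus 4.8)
The plan is to prove part (i) by a direct spectral computation and then deduce part (ii) immediately from Proposition \ref{Pro_Jensen_convexity_with_circulant_weights}. First I would write down the two relevant instances of the defining sum in \eqref{circ_cond} for $n=3$. Setting $\omega_3=\ee^{\frac{2\pi}{3}\ii}$, the condition \eqref{circ_cond} requires that
\[
  S_k:=\lambda_1+\lambda_2\omega_3^{\,k}+\lambda_3\omega_3^{\,2k}\neq0
  \qquad\text{for }k\in\{1,2\}.
\]
The key structural observation I would use first is that, since $\omega_3^{\,3}=1$ and the $\lambda_j$ are real, one has $\omega_3^{\,2}=\overline{\omega_3}$ and $\omega_3^{\,4}=\omega_3$, whence $S_2=\overline{S_1}$. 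Consequently $S_1=0$ if and only if $S_2=0$, so it suffices to analyze the single expression $S_1$.

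Next I would split $S_1$ into its real and imaginary parts using $\Re(\omega_3)=\Re(\omega_3^{\,2})=-\tfrac12$ and $\Im(\omega_3)=-\Im(\omega_3^{\,2})=\tfrac{\sqrt3}{2}$. This gives
\[
  \Re(S_1)=\lambda_1-\tfrac12(\lambda_2+\lambda_3),
  \qquad
  \Im(S_1)=\tfrac{\sqrt3}{2}(\lambda_2-\lambda_3).
\]
Substituting the normalization $\lambda_2+\lambda_3=1-\lambda_1$ into the real part turns it into $\tfrac12(3\lambda_1-1)$. Hence $S_1=0$ holds precisely when $3\lambda_1-1=0$ and $\lambda_2=\lambda_3$, that is, when $\lambda_1=\tfrac13$ and $\lambda_2=\lambda_3$; combined with $\lambda_1+\lambda_2+\lambda_3=1$ this forces $\lambda_2=\lambda_3=\tfrac13$ as well. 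Therefore \eqref{circ_cond} fails if and only if $\lambda_1=\lambda_2=\lambda_3=\tfrac13$, equivalently \eqref{circ_cond} holds if and only if $\lambda_1,\lambda_2,\lambda_3$ are not all equal, which is exactly the assertion of part (i).

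For part (ii), I would simply note that if $\lambda_1,\lambda_2,\lambda_3$ are not all equal, then by part (i) the condition \eqref{circ_cond} is satisfied in the case $n=3$, so part (i) of Proposition \ref{Pro_Jensen_convexity_with_circulant_weights} applies and yields the Jensen convexity of $f$ on $I$. I do not anticipate a genuine obstacle here: the argument is an elementary computation with the third roots of unity. The only point that requires a little care is correctly exploiting the conjugacy $S_2=\overline{S_1}$ so that both instances $k\in\{1,2\}$ collapse to a single real-and-imaginary-part analysis, together with the clean elimination of $\lambda_2+\lambda_3$ via the constraint $\lambda_1+\lambda_2+\lambda_3=1$.
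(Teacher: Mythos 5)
Your proposal is correct and follows essentially the same route as the paper: a direct computation with the third roots of unity, separating real and imaginary parts and using the normalization $\lambda_1+\lambda_2+\lambda_3=1$ to show that \eqref{circ_cond} fails only at $\lambda_1=\lambda_2=\lambda_3=\tfrac13$, after which part (ii) is an immediate application of Proposition \ref{Pro_Jensen_convexity_with_circulant_weights}. The only cosmetic difference is that you exploit the conjugacy $S_2=\overline{S_1}$ up front to collapse the two cases $k\in\{1,2\}$, whereas the paper writes out both conditions and observes afterwards that they are equivalent.
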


\begin{proof}
(i).
For each $k\in\{1,2\}$, we have that
 \[
   \omega_3^{k(j-1)}
      =\exp\left(\frac{2k(j-1)\pi}{3}\ii\right)
      = \begin{cases}
            1 & \text{if $j=1$,}\\
            \ee^{\frac{2k\pi}{3}\ii} = \left( -\frac{1}{2} + \frac{\sqrt{3}}{2}\ii\right)^k & \text{if $j=2$,}\\
            \ee^{\frac{4k\pi}{3}\ii} = \left( -\frac{1}{2} - \frac{\sqrt{3}}{2}\ii \right)^k & \text{if $j=3$.}
        \end{cases}
 \] 
Consequently, the condition \eqref{circ_cond} in case of $n=3$ is  equivalent to
 \[
  \lambda_1+ \left( -\frac{1}{2} + \frac{\sqrt{3}}{2}\ii\right)^k \lambda_2 + \left( -\frac{1}{2} - \frac{\sqrt{3}}{2}\ii \right)^k \lambda_3 \ne 0, \qquad
    k\in\{1,2\}, 
 \]
 i.e.,
 \begin{align*}
   2\lambda_1 + (-1+\sqrt{3}\ii)\lambda_2 - (1+\sqrt{3}\ii)\lambda_3 \ne 0
    \qquad \text{and}\qquad 
   2\lambda_1 - (1+\sqrt{3}\ii)\lambda_2 + (-1+\sqrt{3}\ii)\lambda_3 \ne 0. 
 \end{align*}
Using that $\lambda_1=1-\lambda_2-\lambda_3$, these conditions are equivalent to 
 \begin{align*}
   (2-3\lambda_2-3\lambda_3) + \sqrt{3}(\lambda_2-\lambda_3) \ii \ne 0
    \qquad \text{and}\qquad 
   (2-3\lambda_2-3\lambda_3) + \sqrt{3}(-\lambda_2+\lambda_3) \ii \ne 0. 
 \end{align*}
Here the first condition (and the second one as well) is equivalent to 
 \[
   (2-3\lambda_2-3\lambda_3)^2 + (\lambda_2-\lambda_3)^2>0.
 \]
Therefore, the condition \eqref{circ_cond} in case of $n=3$ does not hold if and only if
 $\lambda_2+\lambda_3=\frac{2}{3}$ and $\lambda_2=\lambda_3$, i.e., 
 if and only if $\lambda_2=\lambda_3=\frac{1}{3}$.
Hence, since $\lambda_1=1-\lambda_2-\lambda_3$, we have that  
 the condition \eqref{circ_cond} in case of $n=3$ does not hold if and only if
 $\lambda_1=\lambda_2=\lambda_3=\frac{1}{3}$,
 which yields part (i).
 
We mention that one could give an alternative proof as well.
By part (vii) of Lemma 1.1 and Lemma 1.2 in Carmona et al.\ \cite{CarEncGagJim}, we get that $P$ (introduced in \eqref{help_circulant_P}) is invertible if and only if 
 condition \eqref{circ_cond} holds.
Then one could calculate directly the determinant of $P$ in case of $n=3$, and study when it can be zero.
The determinant in question can be found, e.g., in Carmona et al.\ \cite[page 787]{CarEncGagJim}. 

(ii). Assume that $\lambda_1,\lambda_2,\lambda_3$ are not equal to each other.
Then, by part (i), the condition \eqref{circ_cond} in case of $n=3$ holds,  and hence part (i) of Proposition \ref{Pro_Jensen_convexity_with_circulant_weights} yields the assertion.
\end{proof}

In the next proposition, on the one hand,
 we make the condition \eqref{circ_cond} in case of $n=4$ more explicit, and, on the other hand,
 we prove part (i) of Proposition \ref{Pro_Jensen_convexity_with_circulant_weights} 
 without this condition for a $4\times 4$ doubly stochastic matrix whose entries are not identically equal to $\frac{1}{4}$.

\begin{Pro}\label{Pro_circulant_n=4}
Let $I\subseteq \RR$ be a nonempty open interval and $f:I\to\RR$ be a function.
Let $\lambda_1,\dots,\lambda_4\in[0,1]$ be such that $\lambda_1+\dots+\lambda_4=1$.
\begin{itemize}
  \item[(i)] The condition \eqref{circ_cond} in case of $n=4$ is equivalent to
             \begin{align}\label{help12}
              (\lambda_1-\lambda_3)^2+(\lambda_2-\lambda_4)^2>0 \qquad \text{and}\qquad \lambda_1+\lambda_3\neq\lambda_2+\lambda_4.
             \end{align}
   \item[(ii)] If $\lambda_i$, $i\in\{1,\ldots,4\}$, are not all equal to each other (i.e,, not all of them are equal to $\frac{1}{4}$)
               and the inequality \eqref{help1_new} with $n=4$
              holds for all $x_1,\ldots,x_4\in I$, then the function $f$ is Jensen convex on $I$.
\end{itemize}
\end{Pro}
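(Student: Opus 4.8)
The plan is to prove the two parts separately, handling part (i) by a direct spectral computation and part (ii) by exhibiting suitable index sets $A,B$ even when $P$ is singular. For part (i), since $\omega_4=\ee^{\frac{2\pi}{4}\ii}=\ii$, I would simply evaluate the sums in \eqref{circ_cond}. Setting $\mu_k:=\sum_{j=1}^4\lambda_j\ii^{k(j-1)}$ for $k\in\{1,2,3\}$, a short computation gives $\mu_1=(\lambda_1-\lambda_3)+(\lambda_2-\lambda_4)\ii$, $\mu_2=(\lambda_1+\lambda_3)-(\lambda_2+\lambda_4)$, and $\mu_3=\overline{\mu_1}=(\lambda_1-\lambda_3)-(\lambda_2-\lambda_4)\ii$. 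Hence $\mu_1\neq0$ and $\mu_3\neq0$ are each equivalent to $(\lambda_1-\lambda_3)^2+(\lambda_2-\lambda_4)^2>0$, while $\mu_2\neq0$ is equivalent to $\lambda_1+\lambda_3\neq\lambda_2+\lambda_4$. Requiring all three to be nonzero is therefore exactly \eqref{help12}, which proves (i).

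For part (ii) the point is that we no longer assume \eqref{circ_cond}, so the matrix $P$ from \eqref{help_circulant_P} may fail to be invertible and Remark \ref{Rem_Jensen_convexity_with_DS_weights}(i) does not apply. Instead I would invoke part (i) of Theorem \ref{Thm_Jensen_convexity_with_DS_weights} directly, which only asks for disjoint nonempty $A,B\subseteq\{1,2,3,4\}$ such that $\frac{1}{|A|}\sum_{\alpha\in A}\be_\alpha-\frac{1}{|B|}\sum_{\beta\in B}\be_\beta$ lies in the linear span of the columns of $P$. The task reduces to showing that such $A,B$ exist whenever the $\lambda_i$ are not all equal, including the degenerate configurations where $P$ is singular.

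To accomplish this I would work with the four columns $c_1,\dots,c_4$ of $P$, which are cyclic shifts of $(\lambda_1,\lambda_2,\lambda_3,\lambda_4)$, and split into two cases. Writing $p:=\lambda_1-\lambda_3$ and $q:=\lambda_2-\lambda_4$, one computes $c_1-c_3=p(\be_1-\be_3)+q(\be_2-\be_4)$ and $c_2-c_4=q(\be_1-\be_3)-p(\be_2-\be_4)$, whose coefficient determinant is $-(p^2+q^2)$. In the first case $(\lambda_1-\lambda_3)^2+(\lambda_2-\lambda_4)^2>0$, these two differences span the same plane as $\be_1-\be_3$ and $\be_2-\be_4$, so $\be_1-\be_3$ lies in the column span and the choice $A=\{1\}$, $B=\{3\}$ works (note this requires nothing about $\mu_2$, so it also covers the subcase $\lambda_1+\lambda_3=\lambda_2+\lambda_4$ where $P$ is singular). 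In the complementary case $\lambda_1=\lambda_3$ and $\lambda_2=\lambda_4$, the hypothesis that the $\lambda_i$ are not all equal forces $\lambda_1\neq\lambda_2$; here $(c_1+c_3)-(c_2+c_4)=2(\lambda_1-\lambda_2)\big((\be_1+\be_3)-(\be_2+\be_4)\big)$, so $\frac12(\be_1+\be_3)-\frac12(\be_2+\be_4)$ lies in the column span and the choice $A=\{1,3\}$, $B=\{2,4\}$ works. Either way, Theorem \ref{Thm_Jensen_convexity_with_DS_weights}(i) yields the Jensen convexity of $f$.

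The hard part will be exactly this second, fully degenerate case. When $\lambda_1=\lambda_3$ and $\lambda_2=\lambda_4$ the columns satisfy $c_1=c_3$ and $c_2=c_4$, so $\be_1-\be_3$ is no longer in the column span and the natural single-index choice $A=\{1\}$, $B=\{3\}$ breaks down. The resolution is to pass to the paired sets $A=\{1,3\}$, $B=\{2,4\}$ and to exploit the surviving nonzero spectral value $\mu_2=2(\lambda_1-\lambda_2)\neq0$; once the correct $A,B$ is identified, the argument is a routine application of Theorem \ref{Thm_Jensen_convexity_with_DS_weights}(i).
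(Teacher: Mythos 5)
Your proof is correct; part (i) is the same computation as the paper's, but your part (ii) takes a genuinely different and somewhat leaner route. The paper first disposes of the invertible case via Proposition \ref{Pro_Jensen_convexity_with_circulant_weights}, and then splits the singular case according to which disjunct of the negation of \eqref{help12} fails: when $\lambda_1+\lambda_3=\lambda_2+\lambda_4$ it chooses $A=\{1,2\}$, $B=\{3,4\}$ and verifies that $\be_1+\be_2-\be_3-\be_4$ lies in the column span by solving a $3\times3$ linear system whose determinant is computed to be $-(\lambda_1+\lambda_3)\bigl((\lambda_1-\lambda_2)^2+(\lambda_2-\lambda_3)^2\bigr)$. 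You instead split on whether $p^2+q^2>0$ with $p=\lambda_1-\lambda_3$, $q=\lambda_2-\lambda_4$, and in that case observe that $p(c_1-c_3)+q(c_2-c_4)=(p^2+q^2)(\be_1-\be_3)$, so $A=\{1\}$, $B=\{3\}$ works; this single $2\times2$ computation covers both the invertible case and the singular case $\lambda_1+\lambda_3=\lambda_2+\lambda_4$ uniformly, bypassing the paper's $3\times3$ determinant entirely. In the remaining fully degenerate case $\lambda_1=\lambda_3$, $\lambda_2=\lambda_4$, $\lambda_1\neq\lambda_2$, your choice $A=\{1,3\}$, $B=\{2,4\}$ coincides with the paper's Case 1, though you certify membership of $\frac12(\be_1+\be_3)-\frac12(\be_2+\be_4)$ in the column span via the single identity $(c_1+c_3)-(c_2+c_4)=2(\lambda_1-\lambda_2)\bigl((\be_1+\be_3)-(\be_2+\be_4)\bigr)$ rather than by describing the whole column space. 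What your approach buys is economy and uniformity (fewer cases, only $2\times2$ linear algebra); what the paper's buys is an explicit description of the column span and of the rank drop in each singular configuration. Both correctly reduce to Theorem \ref{Thm_Jensen_convexity_with_DS_weights}(i) after the standard re-indexing that identifies \eqref{help1_new} with \eqref{help1_DS} for the left-circulant matrix \eqref{help_circulant_P}.
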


Note that, in general, the two conditions in \eqref{help12} are not dependent on each other, since, for example, in the case of $\lambda_1=\frac{1}{4}$, $\lambda_2=\frac{1}{6}$,
 $\lambda_3=\frac{1}{4}$ and $\lambda_4=\frac{1}{3}$, we have $(\lambda_1-\lambda_3)^2+(\lambda_2-\lambda_4)^2=\frac{1}{36}$,
 but $\lambda_1+\lambda_3=\frac{1}{2}=\lambda_2+\lambda_4$.
On the other hand, if $\lambda_1=\frac{1}{3}$, $\lambda_2=\frac{1}{6}$, $\lambda_3=\frac{1}{3}$ and $\lambda_4=\frac{1}{6}$, then
 $(\lambda_1-\lambda_3)^2+(\lambda_2-\lambda_4)^2=0$, but $\lambda_1+\lambda_3=\frac{2}{3}\ne\frac{1}{3}=\lambda_2+\lambda_4$.

\noindent{\bf Proof of Proposition \ref{Pro_circulant_n=4}.}
(i).
For each $k\in\{1,2,3\}$, we have that
 \[
   \omega_4^{k(j-1)}
      =\exp\left(\frac{2k(j-1)\pi}{4}\ii\right)
      = \begin{cases}
            1 & \text{if $j=1$,}\\
            \ee^{\frac{k\pi}{2}\ii} = \ii^k & \text{if $j=2$,}\\
            \ee^{k\pi\ii} = (-1)^k & \text{if $j=3$,}\\
            \ee^{\frac{3k\pi}{2}\ii} = (-\ii)^k & \text{if $j=4$.}
        \end{cases}
 \]
Consequently, the condition \eqref{circ_cond} in case of $n=4$ is equivalent to
 \begin{align*}
   \lambda_1+\ii^k\lambda_2+(-1)^k\lambda_3+(-\ii)^k\lambda_4\ne 0,
     \qquad k\in\{1,2,3\},
 \end{align*}
 i.e.,
 $$
    \lambda_1+\ii\lambda_2-\lambda_3-\ii\lambda_4\neq0,\qquad
    \lambda_1-\lambda_2+\lambda_3-\lambda_4\neq0,\qquad
    \lambda_1-\ii\lambda_2-\lambda_3+\ii\lambda_4\neq0,
 $$
 where $\lambda_1,\dots,\lambda_4\in[0,1]$ are such that $\lambda_1+\dots+\lambda_4=1$.
Here the first condition (and the third condition as well) is equivalent to
 $(\lambda_1-\lambda_3)^2+(\lambda_2-\lambda_4)^2>0$,
 and the second condition is equivalent to $\lambda_1+\lambda_3\neq\lambda_2+\lambda_4$.
All in all, the condition \eqref{circ_cond} in case of $n=4$ is equivalent to \eqref{help12}.

(ii). If the condition \eqref{circ_cond} with $n=4$ holds, then the statement follows by part (i)
 of Proposition \ref{Pro_Jensen_convexity_with_circulant_weights}.
 
If the condition \eqref{circ_cond} with $n=4$ does not hold, then we are going to show that part (i) of Theorem \ref{Thm_Jensen_convexity_with_DS_weights} cab be applied.
Assume that the condition \eqref{circ_cond} in case of $n=4$ does not hold, i.e.,
 \[
   \sum_{j=1}^4\lambda_j\omega_4^{k(j-1)} = 0  \qquad \text{for some \ $k\in\{1,2,3\}$.}
 \]
Then, by part (i), we have that
 \[
    (\lambda_1-\lambda_3)^2+(\lambda_2-\lambda_4)^2 = 0 \qquad \text{or}\qquad \lambda_1+\lambda_3=\lambda_2+\lambda_4.
 \]

Case 1. If $(\lambda_1-\lambda_3)^2+(\lambda_2-\lambda_4)^2 = 0$, then $\lambda_1 = \lambda_3$ and $\lambda_2 = \lambda_4$,
 yielding that the matrix $P$ introduced in \eqref{help_circulant_P} takes the form
 \[
  P=\begin{bmatrix}
     \lambda_1 & \lambda_2 & \lambda_1 & \lambda_2 \\
     \lambda_2 & \lambda_1 & \lambda_2 & \lambda_1 \\
     \lambda_1 & \lambda_2 & \lambda_1 & \lambda_2 \\
     \lambda_2 & \lambda_1 & \lambda_2 & \lambda_1 \\
   \end{bmatrix},
 \]
 where $\lambda_1,\lambda_2\in\RR_+$ are such that $\lambda_1+\lambda_2=\frac{1}{2}$.
If $\lambda_1=\lambda_2$, i.e., $\lambda_1=\lambda_2 = \frac{1}{4}$ were true,
 then all the entries of $P$ would be equal to $\frac{1}{4}$, but we excluded this case.
Consequently, we have that $\lambda_1\ne \lambda_2$.
Then the linear span of the column vectors of $P$ is
 \[
  \left\{
    \alpha \begin{pmatrix}
             \lambda_1 \\
             \lambda_2 \\
             \lambda_1 \\
             \lambda_2 \\
           \end{pmatrix}
     + \beta \begin{pmatrix}
             \lambda_2 \\
             \lambda_1 \\
             \lambda_2 \\
             \lambda_1 \\
           \end{pmatrix}
       : \alpha,\beta\in\RR
  \right\}
  = \left\{
    \begin{pmatrix}
             u \\
             v \\
             u \\
             v \\
           \end{pmatrix}
       : u,v\in\RR
  \right\}
  =\Big\{ u(\be_1+\be_3) + v(\be_2+\be_4): u,v\in\RR\Big\},
 \]
 where, at the first equality, 
 we used that, for any $u,v\in\RR$, the system of linear equations $\alpha \lambda_1+\beta\lambda_2=u$ and
 $\alpha \lambda_2+\beta\lambda_1=v$ can be solved uniquely for $\alpha,\beta\in\RR$ (since $\lambda_1\ne\lambda_2$).
Consequently, the condition on $P$ in part (i) of Theorem \ref{Thm_Jensen_convexity_with_DS_weights}
 is satisfied by choosing $A:=\{1,3\}$ and $B:=\{2,4\}$, and hence
 we can apply part (i) of Theorem \ref{Thm_Jensen_convexity_with_DS_weights} in case of $\lambda_1 = \lambda_3$ and $\lambda_2 = \lambda_4$
 with $\lambda_1\ne\lambda_2$.

Case 2. If $\lambda_1+\lambda_3=\lambda_2+\lambda_4$, then $\lambda_4 = \lambda_1+\lambda_3 - \lambda_2$, and hence
 the matrix $P$ introduced in \eqref{help_circulant_P} takes the form
 \[
  P=\begin{bmatrix}
     \lambda_1 & \lambda_2 & \lambda_3 & \lambda_1+\lambda_3 - \lambda_2 \\
     \lambda_2 & \lambda_3 & \lambda_1+\lambda_3 - \lambda_2 & \lambda_1 \\
     \lambda_3 & \lambda_1+\lambda_3 - \lambda_2 & \lambda_1 & \lambda_2 \\
     \lambda_1+\lambda_3 - \lambda_2 & \lambda_1 & \lambda_2 & \lambda_3 \\
   \end{bmatrix}.
 \]
We will verify that $\be_1+\be_2-\be_3-\be_4$ is in the linear span of the column vectors of $P$.
Since the fourth row (column) of $P$ is nothing else but the sum of its first and third rows (columns) 
 minus its second row (column) (and hence the rank of $P$ is at most $3$), it is enough to check that the system of
 linear equations
 \begin{align}\label{help_lin_sys}
   \begin{bmatrix}
     \lambda_1 & \lambda_2 & \lambda_3 \\
     \lambda_2  & \lambda_3  & \lambda_1+\lambda_3 - \lambda_2 \\
     \lambda_3  & \lambda_1+\lambda_3 - \lambda_2 & \lambda_1 \\
   \end{bmatrix}
   \begin{bmatrix}
     \alpha \\
     \beta \\
     \gamma \\
   \end{bmatrix}
  =  \begin{bmatrix}
     1 \\
     1 \\
     -1 \\
   \end{bmatrix}
 \end{align}
 can be solved for $\alpha,\beta,\gamma\in\RR$.
One can calculate that the determinant of the $3\times 3$ (base) matrix of the system of linear equations \eqref{help_lin_sys}
 is
 \[
  -(\lambda_1+\lambda_3) \big( (\lambda_1-\lambda_2)^2 + (\lambda_2-\lambda_3)^2 \big),
 \]
 which is $0$ if and only if $\lambda_1+\lambda_3=0$ or $\lambda_1=\lambda_2=\lambda_3$.
If $\lambda_1+\lambda_3$ were 0, then $\lambda_2+\lambda_4$ would be $0$ as well, and, using that $\lambda_i\in\RR_+$, $i\in\{1,\ldots,4\}$,
 we would have that $\lambda_i=0$, $i\in\{1,\ldots,4\}$, leading us to a contradiction
 (since $\lambda_1+\lambda_2+\lambda_3+\lambda_4=1$).
If $\lambda_1=\lambda_2=\lambda_3$ were true, then, since $\lambda_4 = \lambda_1+\lambda_3 - \lambda_2$,
 we would have that $\lambda_4=\lambda_1$, and, using that $\lambda_1+\lambda_2+\lambda_3+\lambda_4=1$, it would imply that
 all the entries of $P$ would be equal to $\frac{1}{4}$, but we excluded this case.
All in all, the determinant of the (base) matrix of the system of linear equations \eqref{help_lin_sys} cannot be zero, and hence there exist a unique $\alpha,\beta,\gamma\in\RR$ such that \eqref{help_lin_sys} holds.
Consequently, the condition on $P$ in part (i) of Theorem \ref{Thm_Jensen_convexity_with_DS_weights}
 is satisfied by choosing $A:=\{1,2\}$ and $B:=\{3,4\}$, and hence we can apply part (i) of Theorem \ref{Thm_Jensen_convexity_with_DS_weights} in case of $\lambda_1+\lambda_3=\lambda_2+\lambda_4$
 as well.
\proofend

\section*{Declarations}

\subsection*{Funding}
The research of the second author was supported by the K-134191 NKFIH Grant.

\subsection*{Competing Interests} The authors have no relevant financial or non-financial interests to disclose.

\subsection*{Authors Contribution} Both authors contributed equally to the preparation of this manuscript.

\subsection*{Data availability} Not applicable.

\end{document}